\newtheorem{thm}{Theorem}[section] 
\newtheorem{lemma}[thm]{Lemma}
\newtheorem{proposition}[thm]{Proposition}
\newtheorem{example}[thm]{Example}
\newtheorem{corollary}[thm]{Corollary}
\theoremstyle{definition}
\newtheorem{remark}[thm]{Remark}
\newtheorem{definition-remark}[thm]{Definition-Remark}
 \newtheorem*{main-proposition}{Proposition 1.3}
 \newtheorem*{main-result}{Theorem}
 \newtheorem*{main-example}{Example 1.7}
\newtheorem*{main-corollary}{Corollary 2.2}
\def\Ext{{\rm{Ext}}}
\def\Ex{{\rm{Ex}}}
\def\ker{\operatorname{ker}}
\def\Im{\operatorname{Im}}
\def\c1{\operatorname{c_1}}
\def\c2{\operatorname{c_2}}
\def\kk{{\bf k}}
\def\A{{\mathcal A}}
\def\O{{\mathcal O}}
\def\I{{\mathcal J}}
\def\E{{\mathcal E}}
\def\V{{\mathcal V}}
\def\Y{{\mathcal Y}}
\def\X{{\mathcal X}}
\def\x{\times}                   
\def\cong{\simeq}
\def\+{\oplus}                   
\def\*{\otimes}                  
\def\Spec{\operatorname{Spec}}
\def\TT{\operatorname{T}}
\def\Aut{\operatorname{Aut}}
\def\Ker{\operatorname{Ker}}
\def\Id{\operatorname{id}}
\def\Hom{\operatorname{Hom}}
\def\Def{\operatorname{Def}}
\begin{document}

\title{A note on  deformations of regular embeddings} 

\author{C. Ciliberto}
\address{Ciro Ciliberto, Dipartimento di Matematica, Universit\`a di Roma Tor Vergata, Via della Ricerca Scientifica, 00173 Roma, Italy}
\email{cilibert@mat.uniroma2.it}

\author{F. Flamini}
\address{Flaminio Flamini, Dipartimento di Matematica, Universit\`a di Roma Tor Vergata, Via della Ricerca Scientifica, 00173 Roma, Italy}
\email{flamini@mat.uniroma2.it}

\author{C. Galati}
\address{Concettina Galati, Dipartimento di Matematica, Universit\`a della Calabria, via P. Bucci, cubo 31B, 87036 Arcavacata di Rende (CS), Italy}
\email{galati@mat.unical.it}

\author{A. L. Knutsen}
\address{Andreas Leopold Knutsen, Department of Mathematics, University of Bergen, Postboks 7800,
5020 Bergen, Norway}
\email{andreas.knutsen@math.uib.no}

\begin{abstract}  
In this paper we give a description of the first order deformation space
of a regular embedding  $X\hookrightarrow Y$ of reduced algebraic schemes. We  compare our result with results of Ran (in particular  \cite[Prop. 1.3]{ran}).
\end{abstract}

\maketitle

\begin{flushright}
\emph{This paper is dedicated to Philippe Ellia\\ on the occasion of his sixtieth birthday.}
\end{flushright}

\section*{Introduction}
The deformation theory of morphisms $f: X\to Y$ between schemes over an algebraically closed field $\bf k$ is an important subject, with a lot of applications, and there is a vast literature concerning it (e.g., see References). In general, there is a cotangent complex in a derived category of right bounded complexes giving rise to $\bf k$-vector spaces $T^i_{f}$, with  $T^1_{f}$ the tangent space to the deformation functor of $f$ and $T^2_{f}$ the corresponding obstruction space. When $Y$ is smooth and $f$ is a regular embedding, the description of $T^1_{f}$ and $T^2_{f}$ can be found in \cite [\S 3]{ser}. The aim of this note is to extend this description to the more general  case in which $Y$ is singular, but reduced. 
The main result  is  Proposition \ref {main-result}. The techniques we use are standard in deformation theory.  

This problem has been studied also by Z. Ran in \cite {ran},  where the author makes a proposal  for a \emph{classifying space} (Ran's terminology) for first order deformations of any morphism $f: X\to Y$. In \S \ref {ssec:ran} we compare Ran's result with ours (in the regular embedding case) and we observe that Ran's space maps to $T^1_{f}$, but, in general, the map is not an isomorphism. 
 
The reason we got involved in this topic, has been our work on the universal Severi variety of nodal curves on the moduli spaces of polarised $K3$ surfaces and on the related moduli map, cf. \cite{cfgk1,cfgk2}. In \S \ref {sec:sev} we make some considerations on this subject and explain how our results can be used to attack the study of moduli problems for Severi varieties by degeneration arguments (as we did in  \cite {cfgk2}). We think that these ideas can be usefully applied to investigate still unexplored areas like moduli problems for Severi varieties on  surfaces other than $K3$s, e.g., Enriques surfaces. 


\subsection*{Acknowledgements} 
We express deep gratitude to E. Sernesi for useful discussions on the subject of this paper. We also benefited from conversations with M. Lehn and M. Kemeny.  
Finally we would like to thank the referee for his detailed report. The first three authors  have been supported by the GNSAGA of Indam and by the PRIN project ``Geometry of projective varieties'', funded by the Italian MIUR. 

\subsection*{Terminology }
For  deformation theory we refer to  \cite{ser, hart2,GLS}.  We will
use the same notation and terminology as in \cite{ser}. In particular, 
 a closed embedding $\nu:X\hookrightarrow Y$ of algebraic schemes 
will mean a closed immersion as in  \cite[p.85]{hart}. For a closed embedding
$\nu:X\hookrightarrow Y$  we will set $\Omega^1_Y|_X:=\nu^*(\Omega^1_Y)$.
 

\section{First order deformations of closed regular embeddings}

\subsection {Preliminaries} Let $X$ and $Y$ be reduced (noetherian and separated) algebraic schemes over an algebraically closed field $\bf k$ and let 
$\nu:X\hookrightarrow Y$ be a \emph{regular closed embedding} of codimension $r$, i.e., $X$, as a subscheme of $Y$, is locally defined 
by a regular sequence  (cf. \cite[App. D]{ser}). We want to study 
first order deformations of $\nu$ without assuming, as in \cite[\S 3.4.4]{ser}, that $Y$ is smooth.

A first order deformation $\tilde\nu:\mathcal X\to\mathcal Y$ of $\nu$ is a cartesian diagram  
\begin{eqnarray*}\label{first-order-def}
\xymatrix{    X\,\, \ar@{^{(}->}[r]^i\ar@{^{(}->}[d]^\nu &  \mathcal X\ar@{^{(}->}[d]^{\tilde\nu}\\
 Y\ar[d] \,\,\ar@{^{(}->}[r]^j &  \mathcal Y\ar[d]\\
 \Spec(\bf k) \ar@{^{(}->}[r]^s&   \Spec(\bf k[\epsilon]) }
\end{eqnarray*}
where $\mathcal X$ and $\mathcal Y$ are flat over $\Spec(\bf k[\epsilon])$ (cf. \cite[Def. 3.4.1]{ser}). In particular $\mathcal X$ and $\mathcal Y$ are infinitesimal deformations of $X$ and $Y$ respectively. 
The notions of isomorphic, trivial and locally trivial deformations of $\mathcal X$, $\mathcal Y$ and $\nu$ are given as usual.

\begin{remark}\begin{inparaenum}
\item  In the diagram above $\nu$ is a closed embedding (cf. \cite[Note 3, p. 185]{ser}).\\
\item 
If $\tilde\nu:\mathcal X\hookrightarrow\mathcal Y$ is a first order deformation of $\nu$ and $\Y$ is  trivial, then $\tilde\nu$ is a regular embedding.
This follows by flatness using \cite[Cor. A.11]{ser}.\\
\item If $Y$ is a Cohen-Macaulay scheme and $\tilde\nu:\mathcal X\hookrightarrow\mathcal Y$ is a first order deformation of $\nu$, then $\tilde\nu$ is again a regular embedding. Indeed, since $Y$ is Cohen-Macaulay, then $\Y$ is Cohen-Macaulay (cf. \cite[Thm. 24.5]{ma} or \cite[\S 10.129]{stacks}) and the result follows by \cite[Thm. 9.2]{hart2}. 
\end{inparaenum}
\end{remark}

From now on we will denote by $\rm{Def}_{\nu}$ and $\rm{Def}_{Y}$ 
(resp. $\rm{Def'}_{\nu}$ and $\rm{Def'}_{Y}$) the functors of
deformations (resp. first order locally trivial deformations) of $\nu$ and $Y$ respectively, 
whose properties are described in the aforementioned  references, especially in \cite{ser}.
For the functors $\rm{Def}_{\nu}$ and $\rm{Def}_{Y}$  there is a cotangent complex in a derived category of right bounded complexes giving rise to the $\bf k$-vector spaces $T^i_{\nu}$ and   
$T^i_{Y}$. Then  $T^1_{\nu}$ and   
$T^1_{Y}$ are the tangent spaces to $\rm{Def}_{\nu}$ and $\rm{Def}_{Y}$ and $T^2_{\nu}$ and   
$T^2_{Y}$ the corresponding obstruction spaces (cf. \cite[Thm. C.5.1, Cor. C.5.2]{GLS} and related references as \cite{lis, flen, flen1,Ill, Ill1, buc}).  The aim of this section is to describe the vector space $T^1_{\nu}\cong \rm{Def}_{\nu}({\bf k}[\epsilon])$.
 
Let $\mathcal I$ be a coherent locally free sheaf on $Y$. We will use the  standard identifications of vector spaces  
\begin{equation}\label{standard-identifications1}
\Ex_{{\kk}}(Y,\mathcal I)\simeq\Ext^1_{\O_Y}(\Omega^1_{Y},\mathcal I),
\end{equation}
where  $\Ex_{{\kk}}(Y,\mathcal I):=\Ex(Y/\Spec({\kk}),\mathcal I)$ is the space of (infinitesimal) extensions of $Y$ by $\mathcal I$,
and
\begin{equation}\label{standard-identifications}
\rm{Def}_{Y}({\kk}[\epsilon])\simeq\Ex_{{\kk}}(Y,\O_{Y})\simeq\Ext^1_{\O_Y}(\Omega^1_{Y},\O_{Y})\,\,\quad \mbox{and}\,\,\quad 
\rm{Def'}_{Y}({\kk}[\epsilon])\simeq H^1(Y,\Theta_{Y}),
\end{equation}
where $\Theta_Y={\mathcal Hom}_{\O_Y}(\Omega^1_Y,\O_Y)$, while $H^0(Y,\Theta_{Y})$
is the space of infinitesimal automorphisms of $Y$ (cf. \cite[Thms. 1.1.10 and 2.4.1 and Prop. 2.6.2]{ser}).  

\begin{remark}\label{conormal}
The deformation theory of a regular closed embedding is easier than the one of an arbitrary closed embedding because of the properties of 
its conormal sequence. Indeed, if $\nu:X\hookrightarrow Y$ is a regular closed  embedding  of codimension $r$ and $I$ is the ideal sheaf of $X\subseteq Y$, 
then $I/I^2$ is a  locally free sheaf on $X$ of rank $r$. This implies that the conormal sequence 
\begin{equation}\label{eq:conorm}
\xymatrix{ 0 \ar[r] & I/I^2  \ar[r]  & \Omega{{_Y^1}|}_{X} \ar[r]^{\beta} & \Omega_X^1 \ar[r] & 0}
\end{equation}
of $\nu$ is exact on the left.
\end{remark}

\subsection {The description of the first order deformation space} 

 Recall the natural morphisms 
\[\xymatrix{\mu:   \Ext^1_{\O_Y}(\Omega^1_{Y},\O_{Y}) \ar[r] & \Ext^1_{\O_X}(\Omega^1_{Y}|_{X},\O_{X})} \; \; \mbox{and} \; \; \xymatrix{\lambda:\Ext^1_{\O_X}(\Omega^1_{X},\O_{X}) \ar[r] & \Ext^1_{\O_X}(\Omega^1_{Y}|_{X},\O_{X}),}\]
where:\\
\begin{inparaenum}
\item  [$\bullet$] $\mu$  restricts an extension 
$0   \to\O_{Y} \to \E \to \Omega^1_{Y} \to 0$ to $X$;\\
\item  [$\bullet$] $\lambda$ sends an extension 
$      0   \to   \O_{X} \to  \E \to      \Omega^1_{X} \to 0$
to the fiber product $\E \x_{\Omega^1_{X}} \Omega^1_Y|{_X}$ via $\beta$ in  \eqref{eq:conorm}. 
\end{inparaenum}

\begin{proposition}\label{main-result}
Let $\nu:X\hookrightarrow Y$ be a regular closed embedding of reduced algebraic schemes. Then
 the first order deformation space of $\nu$ is isomorphic to the fiber product
\begin{equation}\label{H}
\xymatrix{   \rm{Def}_{\nu}({\bf k}[\epsilon])\simeq\Ext^1_{\O_X}(\Omega^1_{X},\O_{X})\times_{\Ext^1_{\O_X}(\Omega^1_{Y}|_{X}, \O_{X})}
\Ext^1_{\O_Y}(\Omega^1_{Y},\O_{Y})\ar[r]^{\hspace{3,5cm} p_Y}\ar[d]_{p_X}& \Ext^1_{\O_Y}(\Omega^1_{Y},\O_{Y})\ar[d]^{\mu}\\
\Ext^1_{\O_X}(\Omega^1_{X},\O_{X})\ar[r]^\lambda &\Ext^1_{\O_X}(\Omega^1_{Y}|_{X},\O_{X}).}
\end{equation}
\end{proposition}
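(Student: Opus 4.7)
The strategy is to build a natural $\kk$-linear map
\[
\Phi\colon\Def_{\nu}(\kk[\epsilon])\longrightarrow\Ext^1_{\O_X}(\Omega^1_X,\O_X)\times_{\Ext^1_{\O_X}(\Omega^1_Y|_X,\O_X)}\Ext^1_{\O_Y}(\Omega^1_Y,\O_Y)
\]
by recording the separate deformations of $X$ and $Y$ induced by a deformation of $\nu$, to verify the compatibility required by the fiber product, and finally to invert $\Phi$ by lifting a local regular sequence for $\nu$.

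\emph{The map $\Phi$ and compatibility.} Given $\tilde\nu\colon\X\hookrightarrow\Y$, set $p_X(\tilde\nu):=[\X]$ and $p_Y(\tilde\nu):=[\Y]$. Under \eqref{standard-identifications} these correspond respectively to the classes of the natural Kähler extensions
\[
(E_X)\colon 0\to\O_X\to\Omega^1_\X|_X\to\Omega^1_X\to 0, \quad (E_Y)\colon 0\to\O_Y\to\Omega^1_\Y|_Y\to\Omega^1_Y\to 0.
\]
The inclusions $X\hookrightarrow\X\hookrightarrow\Y$ induce a natural morphism $\Omega^1_\Y|_X\to\Omega^1_\X|_X$, yielding the commutative diagram of $\O_X$-modules
\[
\xymatrix{
0 \ar[r] & \O_X \ar[r]\ar@{=}[d] & \Omega^1_\Y|_X \ar[r]\ar[d] & \Omega^1_Y|_X \ar[r]\ar[d]^{\beta} & 0 \\
0 \ar[r] & \O_X \ar[r] & \Omega^1_\X|_X \ar[r] & \Omega^1_X \ar[r] & 0,
}
\]
whose rows are short exact by flatness of $\X$ and $\Y$ together with Remark \ref{conormal}. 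The top row is the restriction to $X$ of $(E_Y)$, hence represents $\mu(p_Y(\tilde\nu))$; the bottom row is the pullback of $(E_X)$ along $\beta$, hence represents $\lambda(p_X(\tilde\nu))$. Thus $\mu(p_Y(\tilde\nu))=\lambda(p_X(\tilde\nu))$ and $\Phi(\tilde\nu)$ lies in the fiber product.

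\emph{The inverse.} Given a compatible pair $(\alpha,\gamma)$, the class $\gamma$ produces the deformation $\Y$ of $Y$. Cover $Y$ by affine opens $U_i=\Spec A_i$ on which $X$ is cut out by a regular sequence $f_{i,1},\ldots,f_{i,r}$; over $U_i$ one has $\Y|_{U_i}=\Spec\tilde A_i$ with $\tilde A_i$ flat over $\kk[\epsilon]$. The compatibility $\lambda(\alpha)=\mu(\gamma)$ translates, via the extension-of-ideals formalism of \cite[\S 3.4]{ser}, into the existence of lifts $\tilde f_{i,j}\in\tilde A_i$ of $f_{i,j}$ such that $\tilde A_i/(\tilde f_{i,1},\ldots,\tilde f_{i,r})$ represents the restriction of $\alpha$; by \cite[Cor. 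A.11]{ser} this quotient is $\kk[\epsilon]$-flat and the embedding remains regular. The residual ambiguity in the $\tilde f_{i,j}$ is exactly what is required to glue consistently across the overlaps $U_i\cap U_j$, and a cocycle computation shows the local pieces assemble to a global subscheme $\X\subseteq\Y$, defining $\Psi(\alpha,\gamma):=[\X\hookrightarrow\Y]$. The identities $\Phi\circ\Psi=\mathrm{id}$ and $\Psi\circ\Phi=\mathrm{id}$ then follow from a direct local check.

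\textbf{Main obstacle.} The technical crux is the inverse construction: one must promote the cohomological identity $\lambda(\alpha)=\mu(\gamma)$ to honest local lifts of the regular sequence defining $\nu$, and verify that different trivialisations glue into isomorphic (not merely equivalent) first-order deformations of the embedding. This is where the regularity hypothesis on $\nu$ (Remark \ref{conormal}) and the flatness criterion \cite[Cor. A.11]{ser} are indispensable, and the argument genuinely breaks down in the non-regular case.
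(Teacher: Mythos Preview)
Your argument has a genuine gap, and in fact the statement as written is false; the paper itself contains an Errata Corrige replacing Proposition~1.3 by the assertion that $\Phi$ is \emph{surjective} with kernel equal to the image of the natural map $\Delta\colon\Def_{X/\nu/Y}(\kk[\epsilon])\to\Def_\nu(\kk[\epsilon])$, and Example~1.7 exhibits a situation where this kernel is nonzero.

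The problem lies in your inverse construction. You write that ``the residual ambiguity in the $\tilde f_{i,j}$ is exactly what is required to glue consistently,'' and that $\Psi\circ\Phi=\mathrm{id}$ follows from ``a direct local check.'' Neither claim survives scrutiny. On each $U_i$ the lifts $\tilde f_{i,j}$ are determined only up to elements of $\epsilon\cdot I_i$, and after passing to the quotient defining $\X|_{U_i}$ the remaining freedom is a section of $\Hom_{\O_X}(I/I^2,\O_X)|_{U_i\cap X}=N_{X/Y}|_{U_i\cap X}$. Two global choices of lifts therefore differ by a section of $N_{X/Y}$, and such a section changes the isomorphism class of the embedding $\X\hookrightarrow\Y$ precisely when its image in $\Def_\nu(\kk[\epsilon])$ under $H^0(N_{X/Y})\to\Def_\nu(\kk[\epsilon])$ is nonzero. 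Equivalently, the ambiguity is parametrised by deformations of $\nu$ fixing both $X$ and $Y$, i.e.\ by $\Def_{X/\nu/Y}(\kk[\epsilon])\simeq\Hom_{\O_X}(\Omega^1_Y|_X,\O_X)$ modulo the automorphisms coming from $H^0(\Theta_X)\times H^0(\Theta_Y)$. This quotient need not vanish (take $Y=X\times Z$ with $h^0(\Theta_X)=h^0(\Theta_Z)=0$ and $\dim Z>0$, as in Example~1.7), so $\Psi$ is not well defined and $\Phi$ is not injective.

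The paper's own argument, read carefully, only establishes that $\lambda(s)=\mu(t)$ is equivalent to the \emph{existence} of a closed embedding $\X\hookrightarrow\Y$ over $\kk[\epsilon]$ restricting to $\nu$; it never addresses uniqueness of this embedding up to isomorphism, which is exactly the missing injectivity. Your construction of $\Phi$ and verification that it lands in the fiber product are correct and match the paper; what cannot be repaired is the claim that $\Phi$ is an isomorphism.
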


\begin{proof}
Let $s\in \Ext^1_{\O_X}(\Omega^1_{X},\O_{X})$ and $t\in\Ext^1_{\O_Y}(\Omega^1_{Y},\O_{Y})$ correspond to first order deformations
$\mathcal X\to \Spec({\bf k}[\epsilon])$ and $\mathcal Y\to \Spec({\bf k}[\epsilon])$ of $X$ and $Y$ respectively, with closed embeddings
 $i:X\hookrightarrow\X$ and $j:Y\hookrightarrow\Y$, cf. \eqref{standard-identifications}. With this interpretation, we have
\[ 
s:=\Big(0   \to\O_{Y} \to\Omega^1_{\Y}|_{Y} \to \Omega^1_{Y} \to 0\Big)\quad  \stackrel{\mu}\longmapsto  \quad\mu(s):=\Big(0   \to \O_{X} \to\Omega^1_{\Y}|_{X} \to \Omega^1_{Y}|_{X} \to 0\Big)\]
and
\[ t:=\Big(0   \to   \O_{X} \to    \Omega^1_{\X}|_{X} \to    \Omega^1_{X}\to 0\Big) \quad  \stackrel {\lambda}\longmapsto\quad  \lambda(t):= \Big( 0 \to \O_X \to  \Omega^1_{\X}|_X \x_{\Omega^1_X} \Omega^1_{Y}|_X \to \Omega^1_Y|_{X}\Big),\]
so that we have a commutative diagram 
\begin{equation}\label{lambda(epsilon)}
\xymatrix{    &     &     0\ar[d]  &    0\ar[d]  & \\
  & 0 \ar[r]  \ar[d]   & \O_{X} \ar@{=}[r] \ar[d]   & \O_{X} \ar[r] \ar[d]  & 0\\
  0   \ar[r]   & I/I^2 \ar[r] \ar@{=}[d]    & \Omega^1_{Y}|_{X}\times_{\Omega^1_{X}}\Omega^1_{\X}|_{X}  \ar[r]^{\hspace{0.6cm}L} \ar[d]^\alpha  
  & \Omega^1_{\X}|_{X} \ar[r] \ar[d]^M & 0\\
   0   \ar[r]   &I/I^2 \ar[r]  \ar[d]   & \Omega^1_{Y}|_{X} \ar[r]^\beta \ar[d]   & \Omega^1_{X} \ar[r] \ar[d] & 0\\ 
   &   0  &     0  &    0  &}
\end{equation}
where $I$ is the ideal sheaf of $X$ in $Y$, as in Remark \ref{conormal}.
\color{black}

We also  observe that \color{black} $j\nu:X\hookrightarrow Y\hookrightarrow\Y$
is a regular closed embedding and $\Y$ is reduced.   The conormal exact sequences (see Remark \ref{conormal}) of $\nu$, $j$ and $j\nu $ 
fit in the  commutative diagram
\begin{equation}\label{conormal-nu-i}
\xymatrix{    &     &     0\ar[d]  &    0\ar[d]  & \\
  0   \ar[r]   & \O_X  \ar@{=}[d]  \ar[r]   & {\mathcal I}/{\mathcal I}^2 \ar[r] \ar[d]   &  I/I^2\ar[r] \ar[d]  & 0\\
  0   \ar[r]   & \O_X \ar[r]     & \Omega^1_{\mathcal Y}|_{X} \ar[r] \ar[d]^{\gamma }   & \Omega^1_{Y}|_{X} \ar[r] \ar[d] & 0\\
      &   & \Omega^1_{X}\ar@{=}[r] \ar[d]   & \Omega^1_{X}  \ar[d] & \\ 
   &     &     0  &    0  &}
\end{equation}
where $\mathcal I$ is the ideal sheaf of $X$ in $\Y$.

 To prove the proposition, we want to prove that $\lambda (s)=\mu(t)$ if and only if there exists a closed embedding
\begin{equation}\label{inclusion}
\xymatrix{  \mathcal X\,\ar@{^{(}->}[r]^{\tilde\nu}\ar[dr]& \mathcal Y\ar[d]\\
& \Spec(\bf k[\epsilon])}
\end{equation}
restricting to $\nu:X\to Y$ over $\Spec(\bf k)$, i.e. such that $\tilde\nu i=j\nu$.

 First  assume that $\lambda (s)=\mu(t)$, i.e.,  there exists  a commutative diagram 
\begin{equation}\label{lambda=mu}
\xymatrix{    0   \ar[r]   & \O_X \ar[r] \ar@{=}[d]    & \Omega^1_{\mathcal Y}|_{X} \ar[r] \ar[d]^\simeq  & \Omega^1_{Y}|_{X} \ar[r] \ar@{=}[d] & 0\\
 0 \ar[r]   & \O_X  \ar[r]   &\Omega^1_{Y}|_{X}\times_{\Omega^1_{X}}\Omega^1_{\X}|_{X}\ar[r]  & \Omega^1_{Y}|_{X} \ar[r]  &  0.}
\end{equation}
By  \eqref{lambda=mu}, the map $\beta\alpha=ML$  in \eqref{lambda(epsilon)}
can be identified with  the map $\gamma$ in \eqref{conormal-nu-i},  whose kernel is 
 $${\mathcal I}/{\mathcal I}^2\simeq \ker(\beta\alpha)=\ker(ML)\simeq I/I^2\oplus\O_{X}.$$ 
In particular the conormal sequence of $X$ in $\Y$, which is the central vertical sequence in \eqref{conormal-nu-i}, 
 fits in the  commutative diagram 
\begin{equation}\label{lambda(epsilon)1}
\xymatrix{    & 0\ar[d]     &     0\ar[d]  &    0\ar[d]  & \\
  0   \ar[r]   & I/I^2\ar[r]  \ar@{=}[d]   & I/I^2\oplus\O_{X} \ar[r] \ar[d]   & \O_{X} \ar[r] \ar[d]  & 0\\
  0   \ar[r]   & I/I^2 \ar[r] \ar[d]    & \Omega^1_{\mathcal Y}|_{X} \ar[r]^L \ar[d]^{\beta\alpha=\gamma}   & \Omega^1_{\X}|_{X} \ar[r] \ar[d]^M & 0\\
      &0 \ar[r]   & \Omega^1_{X}\ar@{=}[r] \ar[d]   & \Omega^1_{X} \ar[r] \ar[d] & 0\\ 
   &     &     0  &    0  &}
\end{equation}
Since  $I/I^2\oplus  \O_{X}$ is locally free,  by  \eqref {lambda(epsilon)1}  we obtain the new diagram
\begin{equation}\label{lambda(epsilon)2}
\xymatrix{    & 0\ar[d]     &     0\ar[d]  &    0\ar[d]  & \\
  0   \ar[r]   & I/I^2\ar[r]  \ar@{=}[d]   & I/I^2\oplus\O_{X} \ar[r] \ar[d]   & \O_{X} \ar[r] \ar[d]  & 0\\
  0   \ar[r]   & I/I^2 \ar[r] \ar[d]    & \O_{\mathcal Y} \ar[r] \ar[d]   & \O_{\X} \ar[r] \ar[d] & 0\\
      &0 \ar[r]   & \O_{X}\ar@{=}[r] \ar[d]   & \O_{X} \ar[r] \ar[d] & 0\\ 
   &     &     0  &    0  &}
\end{equation}
where the  map $ \O_{\Y}\to  \O_{\X}$ (which provides the desired embedding $\X\subseteq \Y$) is induced by   $L$ in \eqref {lambda(epsilon)1}, and by the isomorphisms (see  \cite[Proof of Thm. 1.1.10]{ser})
\[
 \O_{\Y}\simeq\Omega^1_{\mathcal Y}|_{X}\times_{\Omega^1_{X}}\O_X\quad \text{and} \quad \O_{\X}\simeq\Omega^1_{\mathcal X}|_{X}\times_{\Omega^1_{X}}\O_X
\]
where the fiber  products are between the derivation $d:\O_X\to\Omega^1_X$ and the conormal maps.

Conversely, assume that there is a closed embedding  \eqref{inclusion} such that $\tilde\nu i=j\nu$. Then one obtains a diagram like \eqref{lambda(epsilon)},
with $\Omega^1_{Y}|_{X}\times_{\Omega^1_{X}}\Omega^1_{\X}|_{X}$  replaced by $\Omega^1_{\Y}|_X$. Using the universal property of the 
fiber product, one deduces an isomorphism of extensions as in \eqref{lambda=mu}, ending the proof of the proposition. 
\end{proof}

\begin{remark}\label{lambda-in-general} Suitable versions of the maps $\lambda$ and $\mu$ can be defined even if the embedding $\nu$ is not regular. However our proof of Proposition \ref{main-result} does not extend, as it is, to this more general case, because the kernel of the conormal sequence of $X$ in $Y$ is no longer locally free.
\end{remark}

\subsection {Comments} 

To  better understand the maps  $\lambda$ and $\mu$, and the related geometry, observe that they fit in the following diagram with exact rows and columns:
\begin{equation} \label{lambda-mu}
\end{equation}
\vspace{-1cm}
\[
\tiny\begin{array}{ccccccc}
 & 0& & 0 & &\\
 &\downarrow & &\downarrow & &\\
 &\Hom_{\O_Y}(\Omega^1_{Y},I) &= &\Hom_{\O_Y}(\Omega^1_{Y},I) & &\\
 & \downarrow & &\downarrow & &\\
 &H^0(\Theta_Y) &= & H^0(\Theta_Y)  & &\\
 &\downarrow & &\downarrow & &\\
 &\Hom_{\O_X}(\Omega^1_{Y}|_X,\O_{X}) & = &\Hom_{\O_X}(\Omega^1_{Y}|_X,\O_{X}) & &\\
 &\downarrow & &\downarrow & &\\
 &\Ext^1_{\O_Y}(\Omega^1_{Y},I) & =&\Ext^1_{\O_Y}(\Omega^1_{Y},I) & &\\
 &\downarrow & &\downarrow & &\\
0   \to   H^0(\Theta_{X})\to \Hom_{\O_X}(\Omega^1_{Y}|_X,\O_{X}) \to  
  H^0(N_{X/Y}) \longrightarrow &\rm{Def}_{\nu}({\bf k}[\epsilon]) &\stackrel{p_Y}{\longrightarrow} &\Ext^1_{\O_Y}(\Omega^1_{Y},\O_Y) &\longrightarrow & H^1(N_{X/Y})\\
\hspace{0,5cm}\parallel\hspace{2cm} \parallel\hspace{2cm} \parallel  & \hspace{0.38cm} \downarrow^{p_X} & &\hspace{0.17cm}\downarrow^\mu & & \parallel \\
0   \to   H^0(\Theta_{X})\to \Hom_{\O_X}(\Omega^1_{Y}|_X,\O_{X}) \to  
  H^0(N_{X/Y}) {\longrightarrow} &\Ext^1_{\O_X}(\Omega^1_{X},\O_{X})& \stackrel{\lambda}{\longrightarrow} &  \Ext^1_{\O_X}(\Omega^1_{Y}|_X,\O_{X})& \longrightarrow & H^1(N_{X/Y})\\
   &\downarrow & &\downarrow & &\downarrow \\
 &\Ext^2_{\O_Y}(\Omega^1_{Y},I)&=&\Ext^2_{\O_Y}(\Omega^1_{Y},I)& & \Ext^2_{\O_X}(\Omega^1_{X},\O_{X}) \\
    &\downarrow & &\downarrow & &\downarrow \\
 &\Ext^2_{\O_Y}(\Omega^1_{Y},\O_Y) &=&\Ext^2_{\O_Y}(\Omega^1_{Y},\O_Y) & & \Ext^2_{\O_X}(\Omega^1_{Y}|_X,\O_{X})\\
 &\downarrow&&\downarrow&  &\downarrow  \\
 &\Ext^2_{\O_Y}(\Omega^1_{Y},\O_X)&=&\Ext^2_{\O_Y}(\Omega^1_{Y},\O_X)& & \vdots \\
&\vdots && \vdots &\\
 \end{array}
\]
\normalsize
where the lower  row arises from the  conormal sequence of $\nu:X\hookrightarrow Y$ and the second column  from  $0\to I\to\O_Y\to\O_X\to 0.$ We denoted by $N_{X/Y}={\mathcal Hom}_{\O_X}(I/I^2,\O_X)$  the normal sheaf of $X$ in $Y$,  we used the standard isomorphisms $H^i(X,N_{X/Y})\simeq \Ext_{\O_X}^i(I/I^2,\O_X)$ (using that $I/I^2$ is locally free) and one checks that $\Ext^i_{\O_X}(\Omega^1_{Y}|_X,\O_{X})\simeq \Ext^i_{\O_Y}(\Omega^1_{Y},\O_{X})$,
for $i=0,1$. The space $H^0(X,N_{X/Y})$ consists of first order
deformations of $X$ as a subscheme of (the fixed scheme) $Y$, while $H^1(X,N_{X/Y})$ is the corresponding obstruction space (because $X$ is regularly embedded in $Y$), and the map $H^0(N_{X/Y}) \rightarrow \rm{Def}_{\nu}({\bf k}[\epsilon])$  is the obvious 
morphism. 

The diagram \eqref {lambda-mu}  and the cotangent braid \cite[p.~446]{GLS} suggest that $\Hom_{\O_X}(\Omega^1_{Y}|_X,\O_{X})$ should be the first order deformation space of $\nu$ preserving $X$ and $Y$ (cf. \cite[\S 3.4.1]{ser}), while $\Ext^1_{\O_X}(\Omega^1_{Y}|_X,\O_{X})$ should be the corresponding obstruction space. This is the case if $Y$ is smooth (see \cite [Prop. 3.4.2]{ser}). The first fact follows from the following more general result.

\begin{proposition}\label{fixed-domain-target1}
Let $f:X\to Y$ be a morphism of reduced algebraic schemes and let ${\rm Def}_{X/f /Y}$ be the deformation functor of $f$ preserving $X$ and $Y$. Then \begin{equation}\label{fixed-domain-target-def}
{\rm Def}_{X/f /Y}(\kk(\epsilon))\simeq\Hom_{\O_X}(f^*\Omega^1_{Y},\O_{X}).
\end{equation}
If  $\Hom_{\O_X}(f^*\Omega^1_{Y},\O_{X})$ has finite dimension (in particular if $X$ is projective), then   ${\rm Def}_{X/\nu /Y}$ is pro-representable.
\end{proposition}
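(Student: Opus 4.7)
The plan has two parts: first identify the tangent space, then verify Schlessinger's criterion for pro-representability.

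For the tangent space computation, I would reduce to the affine case by taking $X=\Spec B$ and $Y=\Spec A_Y$, with $f$ corresponding to a $\kk$-algebra homomorphism $\phi\colon A_Y\to B$. By unwinding the definition, a first-order deformation of $f$ preserving $X$ and $Y$ is a $\kk[\epsilon]$-algebra map $\tilde\phi\colon A_Y\otimes_\kk\kk[\epsilon]\to B\otimes_\kk\kk[\epsilon]$ that reduces to $\phi$ modulo $\epsilon$. By $\kk[\epsilon]$-linearity, $\tilde\phi$ is determined by its restriction to $A_Y$, which may be written uniquely as $a\mapsto\phi(a)+\epsilon D(a)$ for some $\kk$-linear map $D\colon A_Y\to B$ (viewed as an $A_Y$-module via $\phi$). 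Expanding the multiplicativity condition $\tilde\phi(ab)=\tilde\phi(a)\tilde\phi(b)$ and using $\epsilon^2=0$ immediately forces the Leibniz rule on $D$, so that $\tilde\phi$'s correspond bijectively to elements of $\Der_\kk(A_Y,B)\simeq\Hom_B(f^*\Omega^1_{A_Y/\kk},B)$. The globalization is automatic: each local $D$ is uniquely determined by $\tilde\phi$ on the affine patch, so the local pieces glue into a global section of $\Shom_{\O_X}(f^*\Omega^1_Y,\O_X)$, yielding \eqref{fixed-domain-target-def}.

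For pro-representability, I would verify Schlessinger's conditions H1--H4 (cf.\ \cite[Thm.\ 2.3.2]{ser}). The key observation is that $\Def_{X/f/Y}$ commutes with arbitrary fiber products in $\mathrm{Art}_\kk$: given Artin local $\kk$-algebras $A'\to A\leftarrow A''$, exactness of $-\otimes_\kk B$ and $-\otimes_\kk A_Y$ gives $B\otimes(A'\times_A A'')\simeq(B\otimes A')\times_{B\otimes A}(B\otimes A'')$ and the analogous identity for $A_Y$. Hence an algebra lift of $\phi$ to $A'\times_A A''$ is exactly a compatible pair of lifts to $A'$ and to $A''$, and gluing on a Zariski cover promotes this bijection to the global level. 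Conditions H1, H2 and H4 then hold automatically, while H3 is precisely the finite-dimensionality hypothesis (automatic when $X$ is projective, since $\Shom_{\O_X}(f^*\Omega^1_Y,\O_X)$ is then coherent on a projective scheme). All four Schlessinger conditions being met, $\Def_{X/f/Y}$ is pro-representable.

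The only delicate point I foresee is the passage from the affine to the global setting in both steps. Since the derivations and the fiber-product bijection are defined by sheaf-theoretic conditions that are uniquely determined by $\tilde\phi$, no further cocycle verification is needed and the gluing is essentially formal; but one does have to be explicit that no smoothness or regularity on $X$ or $Y$ enters the argument, so that the proposition holds in the stated generality.
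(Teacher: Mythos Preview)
Your argument is correct, but it follows a different route from the paper's. The paper identifies $\Def_{X/f/Y}$ with the local Hilbert functor $H^{X\times Y}_\Gamma$ of the graph $\Gamma\hookrightarrow X\times Y$, so that the tangent space is $H^0(\Gamma,N_{\Gamma|X\times Y})\simeq\Hom_{\O_\Gamma}(\I/\I^2,\O_\Gamma)$; it then analyses the conormal sequence of $\Gamma$ and the split sequence $0\to q^*\Omega^1_Y\to\Omega^1_{X\times Y}\to p^*\Omega^1_X\to 0$ to show that $(\I/\I^2)/\A\simeq f^*\Omega^1_Y$ with $\A$ torsion, whence reducedness of $\Gamma$ (equivalently of $X$) kills $\Hom(\A,\O_\Gamma)$ and gives \eqref{fixed-domain-target-def}. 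Pro-representability then comes for free from the known pro-representability of $H^{X\times Y}_\Gamma$ (\cite[Cor.~3.2.2]{ser}).

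Your direct derivation computation and Schlessinger verification are more elementary and self-contained; in fact they never use that $X$ or $Y$ is reduced, so your argument proves slightly more than stated. The paper's approach, by contrast, is more geometric and makes the pro-representability a one-line consequence of Hilbert-scheme theory, at the cost of requiring reducedness to handle the torsion kernel $\A$ of the conormal sequence of the (generally non-regular) graph embedding. One small point of presentation: your ``reduce to the affine case'' phrasing suggests taking $Y=\Spec A_Y$, but what you really use is the sheaf-level identity $\Der_\kk(f^{-1}\O_Y,\O_X)\simeq\Shom_{\O_X}(f^*\Omega^1_Y,\O_X)$, which holds globally without any affine reduction; it would be cleaner to phrase the argument directly at that level.
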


\begin{proof}
Let $j:\Gamma\hookrightarrow X\times Y$ be the embedding of the graph of $f$ in $X\times Y$ and $q:X\times Y\to Y$ and 
$p:X\times Y\to X$ the natural projections. Then, by arguing  as in  step (i) of the proof of \cite[Prop. 3.4.2]{ser}, 
we find  a natural isomorphism of functors between  ${\rm Def}_{X/f /Y}$ and the local Hilbert functor $H^{X\times Y}_\Gamma.$
In particular, by \cite[Prop. 3.2.1]{ser}, we have  
$${\rm Def}_{X/\nu /Y}(\kk(\epsilon))\simeq H^{X\times Y}_\Gamma(\kk(\epsilon))\simeq H^0(\Gamma,N_{\Gamma |X\times Y})\simeq\Hom_{\O_{\Gamma}}(\I /{\I}^2,\O_{\Gamma}),$$
where $\I$ is the ideal sheaf of $\Gamma$ in $X\times Y$.
Now observe that $\Gamma$ and $X$ are isomorphic via $pj$. Hence $\Gamma$ is reduced and the conormal  sequence 
of $\Gamma$ in $X\times Y$ can be written as 
$$
\xymatrix{0 \ar[r] & \A \ar[r] &  \I /{\I}^2 \ar[r] & \Omega^1_{X\times Y}|_{\Gamma} \ar[r] & \Omega^1_{\Gamma} \ar[r] &  0,}
$$
where $\A$ is a torsion sheaf on $\Gamma$. Moreover we have  the (split) exact sequence
$$
\xymatrix{0 \ar[r] &  q^*\Omega^1_{Y} \ar[r] & \Omega^1_{X\times Y} \ar[r] &  p^*\Omega^1_{X} \ar[r] &  0.}
$$
 By restricting this   to $\Gamma$ one finds the  exact sequence
$$
\xymatrix{0 \ar[r] &  j^*q^*\Omega^1_{Y} \ar[r] &  j^*\Omega^1_{X\times Y}=\Omega^1_{X\times Y}|_{\Gamma} \ar[r] &  j^*p^*\Omega^1_{X} \ar[r] &  0.}
$$
Since $j^*p^*\Omega^1_{X}$ and $\Omega^1_\Gamma$ are isomorphic, one obtains an isomorphism   $j^*q^*\Omega^1_{Y}\cong (\I /{\I}^2)/ \A$. Moreover, $j^*q^*\Omega^1_{Y}=(pj)^*f^*\Omega^1_{Y}$.
It follows that
$$
{\rm Def}_{X/\nu /Y}(\kk(\epsilon))\simeq  \Hom_{\O_{\Gamma}}(\I /{\I}^2,\O_{\Gamma})\simeq\Hom_{\O_{\Gamma}}((\I /{\I}^2)/\A,\O_{\Gamma})
\simeq   \Hom_{\O_X}(f^*\Omega^1_{Y},\O_{X}).
$$
\normalsize
The last statement of the proposition follows by the isomorphism of functors $H^{X\times Y}_\Gamma\simeq {\rm Def}_{X/f/Y}$ and the fact that,
under the hypothesis,
$H^{X\times Y}_\Gamma$ is pro-representable (cf. \cite[Cor. 3.2.2]{ser}).
\end{proof}

\begin{remark}
The proof of Proposition \ref{fixed-domain-target1} adapts the proof of \cite[Prop. 3.4.2]{ser} to the singular case.
Note that, if $Y$ is singular, the graph of $f:X\to Y$ is in general  no longer a regular embedding, even if $f$ is a regular embedding. Therefore
$$
H^1(\Gamma, N_{\Gamma|X\times Y})
\simeq H^1({\mathcal Hom}_{\O_X}(f^ *\Omega^1_{Y},\O_{X}))\subseteq \Ext^1_{\O_X}(f^ *\Omega^1_{Y},\O_{X}) 
$$
is not necessarily the obstruction space for $H^{X\times Y}_\Gamma\simeq {\rm Def}_{X/f /Y}$. 
\end{remark}

\subsection{Comparison with results by Z. Ran}\label {ssec:ran}

In \cite {ran} there is a proposal for a \emph{classifying space} for first order deformations of a morphism $f: X\to Y$, with $X,Y$ any pair of schemes. Given $f: X\to Y$ there are two obvious maps
\[
\xymatrix{ \delta_0: f^ *\O_Y \ar[r] &  \O_X} \quad \text{and}\quad \xymatrix{ \delta_1: f^ *\Omega^ 1_Y \ar[r] & \Omega^ 1_X.}
\]
In \cite {ran} one constructs vector spaces $\Ext^ i(\delta_1,\delta_0)$, $i\geqslant 0$, fitting in the long exact sequence
$$
\xymatrix{ 0 \ar[r] &  {\rm Hom} (\delta_1,\delta_0) \ar[r] &
{\rm Hom}_{\O_{X}}(\Omega^1_{X},\O_{X})\oplus{\rm Hom}_{\O_{Y}}(\Omega^1_{Y},\O_{Y})
\ar[r]^{\hspace{1.3cm}\varphi_0} & {\rm Hom}_{\O_{X}}(f^ *\Omega^1_{Y},\O_{X}) \ar[r]^{\hspace{1.3cm}\partial} &  \\  
 \ar[r]^{\hspace{-0.7cm}\partial} & \Ext^1(\delta_1,\delta_0) \ar[r] &
\Ext^1_{\O_X}(\Omega^1_{X},\O_{X})\oplus\Ext^1_{\O_Y}(\Omega^1_{Y},\O_{Y})
\ar[r]^{\hspace{1.3cm}\varphi_1} & 
\Ext^1_{\O_X}(f^ *\Omega^1_{Y},\O_{X}) \ar[r] & \dots}
$$
 where
\[
\varphi_0(\alpha,\beta)=\alpha \delta_1-\delta_0f^ *\beta
\]
and the map $\varphi_1$ is defined accordingly. The result in \cite {ran} is that $\Ext^1(\delta_1,\delta_0)$ is the classifying space in question (see \cite [Prop. 3.1]{ran}). 

We remark that $\Ext^1(\delta_1,\delta_0)$ does not, in general,  coincide with ${\rm Def}_{f}({\bf k}[\epsilon])$. Indeed, let us consider the  case in which $f:X\to Y$ is a regular embedding. Then, by \eqref {lambda-mu}, one has $\varphi_1=\lambda-\mu$.  Therefore 
\[
{\rm Def}_{f}({\bf k}[\epsilon])\cong {\rm Ker}(\varphi_1).
\]
We now provide an example where the map $\partial$ is non--zero,  which shows that  $\Ext^1(\delta_0,\delta_1)$ surjects onto ${\rm Def}_{f}({\bf k}[\epsilon])$ but is not isomorphic to it.

\begin{example}\label{ex:ran} {\rm Let $\pi: Y\to Z$ be a smooth surjective  morphism with $Z$ smooth of positive dimension, $Y$ irreducible such that $h^ 0(Y,\Theta_Y)=0$ and irreducible fibres all isomorphic to a fixed $X$ such that $h^ 0(X,\Theta_X)=0$. Then $N_{X/Y}\cong \O_X^ {\dim(Z)}$ and the coboundary map $H^ 0(X,N_{X/Y})\to H^ 1(X,\Theta_X)$ is zero, hence $h^ 0(X,{\Theta_Y}|_X)=\dim(Z)>0$ and $\partial$ is non--zero.

This situation is easy to cook up: it suffices to take $Y=X\times Z$ and $h^ 0(X,\Theta_X)=h^ 0(Z,\Theta_Z)=0$. 
 }
\end{example}

\section{Remarks on deformations of nodal curves on normal crossing surfaces}\label{sec:sev} Let $S$ be a connected surface with (at most) normal crossing singularities and  $i:C\hookrightarrow S$ be the regular embedding of a (reduced) nodal curve. Let $N\subset S$ be the  length-$\delta$  scheme of nodes of $C$ lying on the smooth locus of $S$ and  $\pi:Y\to S$ be the blowing-up at $N$. Denote by $X$ the proper transform of $C$ in $Y$. Then $\phi=\pi|_{X}:X\to S$ is the partial normalization of $C=\phi(X)$ at the nodes on the smooth locus of $S$. We will  assume that $X$ is connected. We will set $g:=p_a(X)$ (since $X$ is connected, then $g\geqslant 0$). 
 Denote by $\nu:X\hookrightarrow Y$ the embedding of $X$ in $Y$ and by 
$\rm{Def}_{\phi}(\bf k[\epsilon])$ the first order deformation space of $\phi$. 
\begin{lemma}\label{kodaira}
There exists a natural isomorphism 
$$\rm{Def}_{\nu}(\bf k[\epsilon])\simeq\rm{Def}_{\phi}(\bf k[\epsilon]).$$  
\end{lemma}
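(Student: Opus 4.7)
Plan. My approach is to construct an explicit pair of mutually inverse natural maps between the two deformation spaces, exploiting the fact that $\pi\colon Y\to S$ is the blow-up of $S$ at the finite set of smooth points $N$.

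To define the forward map $\Phi\colon\mathrm{Def}_\nu(\mathbf k[\epsilon])\to\mathrm{Def}_\phi(\mathbf k[\epsilon])$, I would take a first-order deformation $\tilde\nu\colon\mathcal X\hookrightarrow\mathcal Y$ of $\nu$ and contract the deformed exceptional divisor. Each component $E_i$ of $E=\pi^{-1}(N)$ is a smooth rational curve with normal bundle $\O_{\PP^1}(-1)$ in $Y$, hence $H^j(E_i,N_{E_i/Y})=0$ for $j=0,1$; so $E$ is rigid and unobstructed in $Y$, and extends uniquely to a divisor $\tilde E\subset\mathcal Y$ whose components remain $(-1)$-curves in the deformed fiber. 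Applying Castelnuovo's contractibility criterion in the flat family, $\tilde E$ can be contracted to a flat deformation $\mathcal S$ of $S$ via a morphism $\tilde\pi\colon\mathcal Y\to\mathcal S$ extending $\pi$. I then set $\Phi(\tilde\nu)=\tilde\pi\circ\tilde\nu\colon\mathcal X\to\mathcal S$.

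To build the inverse $\Psi$, I would use the universal property of the blow-up. Given a deformation $\tilde\phi\colon\mathcal X\to\mathcal S$ of $\phi$, the strategy is to construct a canonical length-$\delta$ flat subscheme $\tilde N\subset\mathcal S$ lifting $N$ such that $\tilde\phi^*\mathcal I_{\tilde N}\cdot\O_{\mathcal X}$ is invertible (the central fiber already has this property, since $\phi^*\mathcal I_N\cdot\O_X$ defines a Cartier divisor on $X$, namely the $2\delta$ preimages of the nodes). By the universal property of blowing up, $\tilde\phi$ then factors uniquely through $\mathcal Y:=\mathrm{Bl}_{\tilde N}\mathcal S$, yielding a closed embedding $\tilde\nu\colon\mathcal X\hookrightarrow\mathcal Y$ (the embedding property is forced by flatness and the fact that the central fiber is already a closed embedding). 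This defines $\Psi(\tilde\phi)$.

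The main obstacle will be establishing the canonical existence of the lift $\tilde N\subset\mathcal S$ attached to an arbitrary first-order morphism deformation $\tilde\phi$. This is a local problem at each node $p_i\in N$: the two preimages $q_i,q_i'\in X$ extend to sections $\tilde q_i,\tilde q_i'$ of $\mathcal X\to\Spec\mathbf k[\epsilon]$ (since $X$ is smooth there), and one must verify that their scheme-theoretic image under $\tilde\phi$ in $\mathcal S$ has length one over $\Spec\mathbf k[\epsilon]$ at each $p_i$, so that glueing produces a flat length-$\delta$ subscheme $\tilde N\subset\mathcal S$. Once this local compatibility is in place, $\Psi\circ\Phi=\mathrm{id}$ follows from the uniqueness of $\tilde\pi$ and $\Phi\circ\Psi=\mathrm{id}$ from the universal property of the blow-up; naturality of the isomorphism is then built into both constructions.
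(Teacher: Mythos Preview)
Your overall architecture (contract the exceptional curves to define $\Phi$, blow up a lift $\tilde N$ and invoke the universal property to define $\Psi$) is reasonable, and it is genuinely different from the paper's argument, which simply records the short exact sequence
\[
\mathbf{k}^{2\delta}\longrightarrow \Ext^1_{\O_Y}(\Omega^1_Y,\O_Y)\longrightarrow \Ext^1_{\O_S}(\Omega^1_S,\O_S)\longrightarrow 0
\]
coming from the blow-up and asserts that it induces the desired isomorphism. Your approach is more explicit and would, if completed, yield a concrete inverse rather than a comparison of tangent spaces.

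However, the step you flag as the ``main obstacle'' is a real gap, and your proposed resolution of it is incorrect. In local coordinates $(x,y)$ at a node $p_i$ with branches $y=0$ and $x=0$, a first-order deformation of $\phi$ can take the form
\[
t\longmapsto \bigl(t+\epsilon f(t),\,\epsilon g(t)\bigr)\quad\text{near }q_i,\qquad
s\longmapsto \bigl(\epsilon h(s),\,s+\epsilon k(s)\bigr)\quad\text{near }q_i',
\]
so that $\tilde\phi(\tilde q_i)=(\epsilon f(0),\epsilon g(0))$ and $\tilde\phi(\tilde q_i')=(\epsilon h(0),\epsilon k(0))$. There is no reason for these to agree, so the scheme-theoretic image of $\{\tilde q_i,\tilde q_i'\}$ is \emph{not} in general a section of $\mathcal S$ over $\Spec\mathbf{k}[\epsilon]$; your candidate for $\tilde N$ does not exist.

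What does exist is a different canonical lift: if one asks instead for the unique section $\tilde p_i=(a\epsilon,b\epsilon)$ such that $\tilde\phi^*\mathcal I_{\tilde p_i}$ is invertible at both $q_i$ and $q_i'$, a direct computation forces $b=g(0)$ (from $q_i$) and $a=h(0)$ (from $q_i'$), so $\tilde p_i=(\epsilon h(0),\epsilon g(0))$ is uniquely determined. Geometrically this is the intersection of the two deformed branch images, not the image of either preimage point. With this corrected $\tilde N$ your blow-up construction goes through, but as written the proposal does not supply it.
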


\begin{proof}
One has an exact sequence 
\begin{equation}\label{C1}
\xymatrix{
0 \ar[r] & \bf k^{2\delta} \ar[r] & \Ext^1_{\O_{Y}}( \Omega^ 1_{Y},\O_{Y})\ar[r] & 
\Ext^1_{\O_S}(\Omega^ 1_S,\O_S)\ar[r] & 0}
\end{equation}
inducing a morphism $\rm{Def}_{\nu}(\bf k[\epsilon])\rightarrow\rm{Def}_{\phi}(\bf k[\epsilon])$,
which is an isomorphism.
\end{proof}

Now assume $H$ is a polarization on $S$ and assume that there is an irreducible component $\mathcal B$ 
of a moduli scheme
parametrizing isomorphism classes of polarized surfaces with no worse singularities than normal crossings, such that $(S,H)\in\mathcal B$ and the general point
of $\mathcal B$ corresponds to a pair $(S',H')$, with $S'$ smooth and irreducible.

One sees that there exists (at least locally) a scheme $\V_{m,\delta}$, called
the $(m,\delta)$--\emph{universal Severi variety}, endowed with a morphism 
\begin{equation*}\label{eq:phiVK2}
\xymatrix{ \phi_{m,\delta}: \V_{m,\delta} \ar[r] & \mathcal B.}
\end{equation*} 
The points in $\V_{m,\delta}$ are pairs $(S',C')$ with $(S',H')\in \mathcal B$, and  $C'\in |mH'|$ nodal, with exactly $\delta$ nodes on the smooth locus of $S'$, and connected  normalization at these $\delta$ nodes (in \cite[\S~2]{cfgk1} we treated the special case of $K3$ surfaces). 

Let $\V$ be an irreducible component of $\V_{m,\delta}$ and assume  that for $(S',C')\in \V$ general, the  normalization of $C'$ at the $\delta$ nodes is stable. Then one has the obvious {\em moduli map} 
\begin{equation*}\label{eq:modmap2}
\xymatrix{ \psi_{m,\delta} :\V \ar@{-->}[r] & {\overline {\mathcal M}}_{g},} 
\end{equation*}
where  ${\overline {\mathcal M}}_g$ is the Deligne--Mumford compactification of the moduli space of smooth, genus $g$ curves. Given $(S,C)\in \mathcal V$, if the normalization $X$ of $C$ at the $\delta$ nodes on the smooth locus of $S$ is stable, then 
$\psi_{m,\delta}$ is defined at $(S,C)$.  In particular, from Lemma \ref{kodaira} and diagram \eqref{lambda-mu}, we obtain:

\begin{corollary}\label{lem:tg}
There are  natural identifications 
$$
{\rm T}_{(S, C)}\V_{m,\delta}\simeq \rm{Def}_{\phi}({\bf k[\epsilon]})\simeq \rm{Def}_{\nu}({\bf k[\epsilon]}).
$$
Moreover, if $X$ is stable, then the map  
$$p_X:\rm{Def}_{\nu}({\bf k[\epsilon]})\simeq {\rm T}_{(S, C)}\V_{m,\delta}\longrightarrow \rm{Ext}^1_{\O_X}(\Omega_{X},\O_{X})\simeq T_{[X]}{\overline {\mathcal M}}_g$$
in \eqref{H} (and \eqref{lambda-mu}) is the differential $d_{(S,C)}\psi_{m,\delta}$  of $\psi_{m,\delta}$ at $(S,C)$. In particular, if $\rm{Ext}^1_{\O_Y}(\Omega^ 1_{Y}(X),\O_{Y})=0$ (resp. $\rm{Ext}^2_{\O_Y}(\Omega^ 1_{Y}(X),\O_{Y})=0$), then $d_{(S,C)}\psi_{m,\delta}$ is injective (resp. surjective).
\end{corollary}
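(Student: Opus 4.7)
The plan is threefold. First, I would identify ${\rm T}_{(S,C)}\V_{m,\delta}$ with $\rm{Def}_\phi({\bf k}[\epsilon])$; combined with Lemma \ref{kodaira}, this yields the first assertion. Second, I would recognize $p_X$ as the differential $d_{(S,C)}\psi_{m,\delta}$ under these identifications. Third, I would extract the injectivity and surjectivity criteria from the exact sequence appearing as the middle column of diagram \eqref{lambda-mu}.

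For the first step, a tangent vector to $\V_{m,\delta}$ at $(S,C)$ is, by the construction of the universal Severi variety, a first-order deformation of the pair in which $S$ deforms in $\mathcal B$ and $C\in |mH|$ remains nodal with exactly $\delta$ nodes on the smooth locus of the deformed surface. Requiring the $\delta$ smooth-locus nodes to survive infinitesimally is an equisingular condition which, by the standard local analysis at a node on the smooth locus of $S$ (\'etale-locally modelled on $\{xy=0\}\subset\AA^2$), is equivalent to demanding that the deformation of $(S,C)$ lift to a deformation of the partial normalization $\phi:X\to S$. Concretely, a deformation of $\phi$ yields, via composition with the closed embedding of its image, a deformation of $(S,C)$, while an equisingular deformation at the $\delta$ nodes can be simultaneously normalized to produce a deformation of $\phi$. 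This gives the natural identification ${\rm T}_{(S,C)}\V_{m,\delta}\simeq \rm{Def}_\phi({\bf k}[\epsilon])$.

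For the second step, the moduli map sends $(S,C)$ to the isomorphism class $[X]$ of the normalization. Under the identifications above, a tangent vector corresponds to a cartesian diagram $X\hookrightarrow\X,\ Y\hookrightarrow\Y,\ \tilde\nu:\X\hookrightarrow\Y$ as in \S 1.1. Forgetting $\Y$ and $\tilde\nu$ retains only the first-order deformation $\X$ of $X$, which is exactly the class in $\Ext^1_{\O_X}(\Omega^1_X,\O_X)\simeq T_{[X]}{\overline{\mathcal M}}_g$ singled out by the projection $p_X$ of the fiber product \eqref{H}. Hence $p_X=d_{(S,C)}\psi_{m,\delta}$. For the third step, the fiber product description of $\rm{Def}_\nu({\bf k}[\epsilon])$ yields $\ker(p_X)\simeq\ker(\mu)$ and shows that $p_X$ is surjective whenever $\mu$ is. Applying $\Ext^*_{\O_Y}(\Omega^1_Y,-)$ to $0\to I\to\O_Y\to\O_X\to 0$ (the middle column of \eqref{lambda-mu}) produces the long exact sequence
\[
\cdots\to\Ext^1_{\O_Y}(\Omega^1_Y,I)\to\Ext^1_{\O_Y}(\Omega^1_Y,\O_Y)\stackrel{\mu}{\longrightarrow}\Ext^1_{\O_Y}(\Omega^1_Y,\O_X)\to\Ext^2_{\O_Y}(\Omega^1_Y,I)\to\cdots.
\]
Since $X$ is Cartier in $Y$ (as a codimension-one regular embedding), the ideal $I=\O_Y(-X)$ is invertible and tensoring gives $\Ext^i_{\O_Y}(\Omega^1_Y,I)\simeq\Ext^i_{\O_Y}(\Omega^1_Y(X),\O_Y)$. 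The vanishing of $\Ext^1_{\O_Y}(\Omega^1_Y(X),\O_Y)$ thus forces $\mu$ to be injective, hence $p_X$ injective; while the vanishing of $\Ext^2_{\O_Y}(\Omega^1_Y(X),\O_Y)$ forces $\mu$ to be surjective, hence $p_X$ surjective.

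I expect Step 1 to be the main obstacle: one must reconcile the definition of $\V_{m,\delta}$ as a locus in a moduli-type space with the purely deformation-theoretic functor $\rm{Def}_\phi$, and verify that normalization at the $\delta$ nodes commutes with arbitrary first-order deformations along $\V_{m,\delta}$. This is the conceptual reason for restricting to nodes on the smooth locus of $S$ in the definition of $\V_{m,\delta}$: at such points the local equisingular theory is isomorphic to the classical one for smooth ambient surfaces, and normalization at these nodes commutes with deformations.
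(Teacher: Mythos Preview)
Your approach coincides with the paper's: the corollary is presented there as an immediate consequence of Lemma~\ref{kodaira} and the exact columns of diagram~\eqref{lambda-mu}, and your three steps simply unpack that. Your derivation of the injectivity/surjectivity criteria via $\ker(p_X)\simeq\ker(\mu)$ together with the identification $\Ext^i_{\O_Y}(\Omega^1_Y,I)\simeq\Ext^i_{\O_Y}(\Omega^1_Y(X),\O_Y)$ (using that $I=\O_Y(-X)$ is invertible, since $\nu$ has codimension one) is exactly what the first two columns of~\eqref{lambda-mu} encode.

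That said, the paper itself appends an errata correcting both Proposition~\ref{main-result} and this corollary. The map from ${\rm Def}_\nu({\bf k}[\epsilon])$ to the fiber product in~\eqref{H} is only \emph{surjective}, with kernel the image $\Im(\Delta)$ of ${\rm Def}_{X/\nu/Y}({\bf k}[\epsilon])\simeq\Hom_{\O_X}(\Omega^1_Y|_X,\O_X)$. Your Step~3 treats the fiber product description as an isomorphism, so the conclusion $\ker(p_X)\simeq\ker(\mu)$ misses this extra contribution. In the corrected statement, ${\rm T}_{(S,C)}\V_{m,\delta}$ only surjects onto ${\rm Def}_\nu({\bf k}[\epsilon])$, the differential $d_{(S,C)}\psi_{m,\delta}$ factors through the quotient by $\Im(\Delta)$, and the injectivity criterion requires in addition the vanishing of $\Hom_{\O_X}(\Omega^1_Y|_X,\O_X)$ and $\Hom_{\O_Y}(\Omega^1_Y,\O_Y)$; the surjectivity criterion is unchanged. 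Your argument is thus faithful to the paper's original one-line justification and shares precisely the gap the errata addresses.
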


\begin{remark}
When $S$ is a smooth surface,  the tangent space to $\V_{m,\delta}$ at $(S, C)$
coincides with the space of first order locally trivial deformations of $i:C\hookrightarrow S.$
In particular,  by \cite[Prop. 3.4.17]{ser} we know that  ${\rm T}_{(S, C)}\V_{m,\delta}\simeq H^1(S,T_S\langle C\rangle)$, where $T_S\langle C\rangle$ is the sheaf defined in \cite[(3.56)]{ser}, and  ${\rm{Ext}}^i_{\O_S}(\Omega_{S}(C),\O_{S})\simeq H^{i}(S,T_S(-C))$, for $i=0,1,2,$
as  observed in \cite{fkps} in the case $S$ is a  $K3$ surface. 
\end{remark}

In general the computation of the cohomology groups appearing in diagram \eqref{lambda-mu}, hence in the stament of Lemma \ref {lem:tg}, is difficult.  A possible approach to this problem is by degeneration, as we explain now.

 Since the surface $Y$ has normal crossing, there is on $Y$ a locally free sheaf $\Lambda_{Y}^1$ (cf. \cite[Thm. (3.2)]{fri}) encoding important information about deformations of $Y$. If $f:\mathcal Y\to\Delta$ is any semi-stable deformation
of $Y=f^{-1}(0)$ \cite[(1.12)]{fri} with general fibre $Y_t$, there is a locally free sheaf 
$\Omega_{{\mathcal Y}/\Delta}({\rm{log}}\,Y)$ on
$\mathcal Y$ such that  $\Omega_{{\mathcal Y}/\Delta}({\rm{log}}\,Y)|_{Y_t}\simeq
\Omega^1_{Y_t}$ and
\begin{equation}\label{lambda-uno}
\Lambda_{Y}^1\simeq
\Omega_{{\mathcal Y}/\Delta}({\rm{log}}\,Y)|_{Y}
\end{equation}
(see \cite[\S 3]{fri}, \cite[\S 2]{cfgk2} and related references). 

Let  $\mathcal X\subset\mathcal Y\to\Delta$ be a flat family of curves with fibres $X_t$, for $t\in \Delta$ and 
$X_0=X$ as above. Then, by flatness and semicontinuity, one has 
\begin{equation*}
\dim ({\rm{Ext}}^i_{\O_{Y}}(\Lambda^1_{Y}\otimes \O_Y(X),\O_{Y}))\geqslant \dim({\rm{Ext}}^i_{\O_{Y_t}}(\Omega^1_{Y_t}\otimes \O_{Y_t}(X_t),\O_{Y_t})), \quad \text {for $t\in \Delta$ general}.
\end{equation*}
Hence, in order to prove that the moduli map is generically of maximal rank, it suffices to 
prove vanishing theorems for  ${\rm{Ext}}^i_{\O_{Y}}(\Lambda^1_{Y}(X),\O_{Y})$, for $i=1$ or $i=2$, on $Y$, which in certain cases may be easier to obtain than the vanishings of ${\rm{Ext}}^i_{\O_{Y_t}}(\Omega^1_{Y_t}\otimes \O_{Y_t}(X_t),\O_{Y_t})$
on the general $Y_t$. 

This approach has proved to be useful in the case of $K3$ surfaces (see \cite {cfgk2}). 

%
%

\newpage
%
%


\section*{Errata corrige to \\ 
``A note on  deformations of regular embeddings''}

The main result of the paper \cite{cfgk3} (Proposition 1.3) is wrongly stated.
Nevertheless the proof of Proposition 1.3 and Proposition 1.5 provide
a complete description of ${{\Def}}_{\nu}({\kk}[\epsilon])$ and the paper needs only the corrections below.  
 
\section*{Corrections}\label{sect: Corrections}
 
$\bullet$ The statement of Proposition 1.3 has to be replaced by the following, which is exactly what is proved.
 
 \begin{main-proposition}\label{old-main-result}
Let $\nu:X\hookrightarrow Y$ be a regular closed embedding of reduced algebraic schemes and let $ {\Def}_{X/\nu/Y}$ be the deformation functor of $\nu$ preserving $X$ and $Y$  (cf. \cite[\S 3.4.1]{ser}). Then
there exists a surjective morphism $\Phi$  from $ {\Def}_{\nu}({\kk}[\epsilon])$ to the fiber product 

\begin{equation}\renewcommand{\theequation}{4}\label{H}
\xymatrix{ 
{{\Def}}_{\nu}({\bf k}[\epsilon])  \ar[d]_{\Phi} &\\
   \Ext^1_{\O_X}(\Omega^1_{X},\O_{X})\times_{\Ext^1_{\O_X}(\Omega^1_{Y}|_{X}, \O_{X})}
\Ext^1_{\O_Y}(\Omega^1_{Y},\O_{Y})\ar[r]^{\hspace{2,9cm} p_Y}\ar[d]_{p_X}& \Ext^1_{\O_Y}(\Omega^1_{Y},\O_{Y})\ar[d]^{\mu}\\
 \Ext^1_{\O_X}(\Omega^1_{X},\O_{X})\ar[r]^\lambda &\Ext^1_{\O_X}(\Omega^1_{Y}|_{X},\O_{X})}
\end{equation}
whose kernel is the image of the natural map $\xymatrix{\Delta: {\Def}_{X/\nu /Y}(\kk[\epsilon]) \ar[r] & 
{\Def}_{\nu}({\bf k}[\epsilon])}$.
\end{main-proposition}

Recalling that 
\begin{equation}
\label{eq:prop1.5}
{\Def}_{X/\nu/Y}(\kk[\epsilon])\simeq \Hom_{\O_X}(\nu^*\Omega^1_{Y},\O_{X})=\Hom_{\O_X}(\Omega^1_{Y}|_X,\O_{X}), \tag{$\dagger$}
\end{equation} 
by Proposition 1.5, we obtain the following result describing $ {{\Def}}_{\nu}({\bf k}[\epsilon])$, which is now to be considered the main result of the paper. (In the statement, the map $\xymatrix{\beta: \Omega^1_Y|_X 
\ar[r] & \Omega^1_X}$ is the one in the conormal sequence.)
 
 \begin{main-result}
 Let $\nu:X\hookrightarrow Y$ be a regular closed embedding of reduced algebraic schemes.  Then there exists a long exact sequence

\begin{footnotesize} 
\[\xymatrix{ 
0 \ar[r] &  \Hom_{\O_X}(\Omega^1_{X},\O_{X})\times_{\Hom_{\O_X}(\Omega^1_{Y}|_{X}, \O_{X})}\Hom_{\O_Y}(\Omega^1_{Y},\O_{Y})\ar[r] & \Hom_{\O_X}(\Omega^1_{X},\O_{X}) \x \Hom_{\O_Y}(\Omega^1_{Y},\O_{Y}) \ar@{-}[r]^{\hspace{2.5cm}\Theta} &} \]
\[
\xymatrix{
\ar[r]^{\hspace{-1.4cm}\Theta} & \Hom_{\O_X}(\Omega^1_{Y}|_X,\O_{X}) \ar[r]^{\hspace{0.7cm}\Delta} &  {\Def}_{\nu}({\bf k}[\epsilon]) \ar[r]^{\hspace{-3cm}\Phi} &  \Ext^1_{\O_X}(\Omega^1_{X},\O_{X})\times_{\Ext^1_{\O_X}(\Omega^1_{Y}|_{X}, \O_{X})}\Ext^1_{\O_Y}(\Omega^1_{Y},\O_{Y})\ar[r] &  0}
\]
\end{footnotesize}

where the map $\Theta$ is given by $\Theta(\xi,\eta)=\xi \circ \beta -\eta|_X$.
\end{main-result}
 
\begin{proof}
  The second row of the above exact sequence follows from (the above version of) Proposition 1.3 and \eqref{eq:prop1.5}. 

By the definition of $\Hom_{\O_X}(\nu^*\Omega^1_{Y},\O_{X})$ and $ {{\Def}}_{\nu}({\bf k}[\epsilon])$ (cf. \cite[p. 158 and p. 177]{ser}), an element mapped to zero by $\Delta$ corresponds to a first order deformation $$\tilde\nu:X\times {\Spec}({\bf k}[\epsilon])\to Y\times {\Spec}({\bf k}[\epsilon])$$ of $\nu$  that is trivializable. More precisely, denoting by $H_X\subset {\Aut} ( X \x \Spec ( {\bf k}[\epsilon]))$ the space of automorphisms restricting to the identity on the closed fibre and similarly for  $H_Y\subset {\Aut} (Y \x \Spec ( {\bf k}[\epsilon]))$, there exist $\alpha\in H_X$ and $\beta\in H_Y$, such that $$\alpha\circ (\nu\times {\Id}_{\Spec ({\bf k}[\epsilon])})\circ \beta=\tilde\nu.$$
Then one obtains a natural map $H_X\times H_Y\to {\Def}_{X/\nu/Y}(\kk[\epsilon])$ whose image is the kernel of $\Delta$. By \eqref{eq:prop1.5} and the well-known isomorphisms $H_X\simeq \Hom_{\O_X}(\Omega^1_{X},\O_{X})$ and $H_Y\simeq \Hom_{\O_Y}(\Omega^1_{Y},\O_{Y})$ (cf. \cite[Lemma 1.2.6]{ser}), this map may be identified with $\Theta$. The kernel of $\Theta$ is by definition as in the statement.
\end{proof}

$\bullet$ In the first column of diagram (11) the vector space $ {\Def}_{\nu}({\bf k}[\epsilon])$ must be replaced by the quotient  $ {\Def}_{\nu}({\bf k}[\epsilon])/{\Im}( \Delta)$. 

\vspace{0.3cm}

$\bullet$ The paragraph ``We remark that $\Ext^1(\delta_1,\delta_0)\dots$ not isomorphic to it.'' in \S 1.4 has to be replaced by the following:

``We remark that $\Ext^1(\delta_1,\delta_0)$ coincides with ${{\Def}}_{\nu}({\bf k}[\epsilon])$ in the case when $f: X \to Y$ is a regular embedding. By  (11), with $ {\Def}_{\nu}({\bf k}[\epsilon])$ replaced by $ {\Def}_{\nu}({\bf k}[\epsilon])/{\Im} (\Delta)$, one has $\varphi_1=\lambda-\mu$. Therefore,
  \[
\Ext^1_{\O_X}(\Omega^1_{X},\O_{X})\times_{\Ext^1_{\O_X}(\Omega^1_{Y}|_{X}, \O_{X})}\Ext^1_{\O_Y}(\Omega^1_{Y},\O_{Y})\cong {\Ker}(\varphi_1),
\]
$\Delta$ coincides with $\partial$ and $\Theta$ with $\varphi_0$. Example 1.7 below gives an instance where $\partial=\Delta$ is nonzero.'' 

 \vspace{0.3cm}

$\bullet$ In the proof of Lemma 2.1, the exact sequence (13) is not exact on the left, but this does not affect the proof.

 \vspace{0.3cm}

$\bullet$ Replace the statement of Corollary 2.2 by the following: 

\begin{main-corollary}\label{lem:tg} There is a natural surjective map
$$
\tau: {\TT}_{(S, C)}\V_{m,\delta}\longrightarrow {\Def}_{\phi}({\bf k[\epsilon]})\simeq {\Def}_{\nu}({\bf k[\epsilon]}).
$$

Moreover, if $X$ is stable, then the differential of the moduli map  of $\psi_{m,\delta}$ at $(S,C)$ factors as

$$d_{(S,C)}\psi_{m,\delta}: {\TT}_{(S, C)}\V_{m,\delta}\stackrel{\tau}\longrightarrow{\Def}_{\nu}({\bf k[\epsilon]}) \longrightarrow  {\Def}_{\nu}({\bf k[\epsilon]})/ {\Im}(\Delta) \stackrel{\mathit p_X}\longrightarrow {\Ext}^1_{\O_X}(\Omega_{X},\O_{X})\simeq T_{[X]}{\overline {\mathcal M}}_g,$$
where $p_X$ is the map appearing in the correct version of (11).

In particular,  if ${\Ext}^2_{\O_Y}(\Omega^ 1_{Y}(X),\O_{Y})=0$, then $d_{(S,C)}\psi_{m,\delta}$ is surjective;
if 
$${\Ext}^1_{\O_Y}(\Omega^ 1_{Y}(X),\O_{Y})={\Hom}_{\O_X}(\Omega^ 1_{Y}|_X,\O_{X})= {\Hom}_{\O_Y}(\Omega^ 1_{Y},\O_{Y})=0$$ then $d_{(S,C)}\psi_{m,\delta}$ is injective.
\end{main-corollary}

\vspace{0.3cm}

$\bullet$ At the end of  Remark 2.3, add  "In this case, using the above notation, one has $ {\Hom}_{\O_Y}(\Omega^ 1_{Y},\O_{Y})=H^ 0(Y,T_Y)=0$ and moreover,  by \cite[(4) in proof of Prop. 1.2]{ck}, ${\Hom}_{\O_X}(\Omega^ 1_{Y}|_X,\O_{X})=
H^0(X, {T_Y}_{|_X})=0$."

\subsection*{Acknowledgements} 
We wish to thank Marco Manetti for having kindly pointed out to us that the statement of Proposition 1.3 in \cite{cfgk3}
was wrong and provided precious informations on related topics.


\end{document}